\definecolor{darkgreen}{rgb}{0,0.5,0}
\definecolor{darkred}{rgb}{0.7,0,0}
\theoremstyle{plain}
\newtheorem{lemma}{Lemma}[section]
\newtheorem{thm}[lemma]{Theorem}
\newtheorem{prop}[lemma]{Proposition}
\theoremstyle{definition}
\newtheorem{rmk}[lemma]{Remark}
\numberwithin{equation}{section}
\newcommand{\m}{\ensuremath{{\cal M}}}
\newcommand{\n}{\ensuremath{{\cal N}}}
\newcommand{\cd}{\ensuremath{{\cal D}}}
\newcommand{\cf}{\ensuremath{{\cal F}}}
\newcommand{\cl}{\ensuremath{{\cal L}}}
\newcommand{\pl}[2]{{\frac{\partial #1}{\partial #2}}}
\newcommand{\plndd}[3]{{\frac{\partial^2 #1}
{{\partial #2}{\partial #3}}}}
\newcommand{\ga}{\gamma}
\newcommand{\Ga}{\Gamma}
\newcommand{\de}{\delta}
\newcommand{\om}{\omega}
\newcommand{\si}{\sigma}
\newcommand{\Si}{\Sigma}
\newcommand{\vph}{\varphi}
\newcommand{\ep}{\varepsilon}
\newcommand{\R}{\ensuremath{{\mathbb R}}}
\newcommand{\N}{\ensuremath{{\mathbb N}}}
\newcommand{\lap}{\Delta}
\newcommand{\grad}{\nabla}
\def\blbox{\quad \vrule height7.5pt width4.17pt depth0pt}
\newcommand{\beq}{\begin{equation}}
\newcommand{\eeq}{\end{equation}}
\newcommand{\beqa}{\begin{equation}\begin{aligned}}
\newcommand{\eeqa}{\end{aligned}\end{equation}}
\newcommand{\brmk}{\begin{rmk}}
\newcommand{\ermk}{\end{rmk}}
\newcommand{\partref}[1]{\hbox{(\csname @roman\endcsname{\ref{#1}})}}
\newcommand{\half}{\frac{1}{2}}
\newcommand{\cmt}[1]{\opt{draft}{\textcolor[rgb]{0.5,0,0}{
$\LHD$ #1 $\RHD$\marginpar{\blbox}}}}
\title{{\sc 
the canonical shrinking soliton \\ associated to a ricci flow
}
\\ 
\cmt{DRAFT with comments}
}
\author{Esther Cabezas-Rivas and Peter M. Topping}
\date{25 November 2009}
\begin{document}

\maketitle
\parskip=10pt

\newcommand{\di}{\partial_i}
\newcommand{\djj}{\partial_j}
\newcommand{\dk}{\partial_k}
\newcommand{\dl}{\partial_l}
\newcommand{\tr}{{\rm tr}}
\newcommand{\lie}{{\cal L}}
\newcommand{\Rop}{{\cal R}}
\newcommand{\Rm}{{\mathrm{Rm}}}
\newcommand{\Ric}{{\mathrm{Ric}}}
\newcommand{\Rc}{{\mathrm{Rc}}}
\newcommand{\RicO}{\overset{\circ}{\Ric}}
\newcommand{\RS}{{\mathrm{R}}}
\newcommand{\Hess}{{\mathrm{Hess}}}
\newcommand{\hess}{{\mathrm{hess}}}
\newcommand{\f}{\ensuremath{{\cal F}}}
\newcommand{\ghat}{\hat{g}}
\newcommand{\fhat}{\hat{f}}
\newcommand{\bop}{+}
\newcommand{\avint}{{\int\!\!\!\!\!\!-}}
\newcommand{\Hbb}{\ensuremath{{\mathbb H}}}

\newcommand{\lexp}{{\cl_{\tau_1,\tau_2}\exp_x}}
\newcommand{\ta}{\ensuremath{{\tau_1}}}
\newcommand{\tb}{\ensuremath{{\tau_2}}}
\newcommand{\lc}{\ensuremath{{\cl Cut}}}
\newcommand{\lcslice}{\ensuremath{{\lc_{\ta,\tb}}}}

\begin{abstract}
To every Ricci flow on a manifold \m\ over a time interval
$I\subset\R_-$, we associate a 
shrinking Ricci soliton on the space-time $\m\times I$.
We relate properties of the original Ricci flow to properties
of the new higher-dimensional Ricci flow equipped with its
own time-parameter.
This geometric construction was discovered by consideration
of the theory of optimal transportation, and in particular
the results of the second author \cite{Lopt}, and McCann
and the second author \cite{MT}; we briefly survey the
link between these subjects.
\end{abstract}

\section{Introduction}
\label{intro}

In 1982, Hamilton \cite{ham3D} introduced the study of 
Ricci flow, which evolves a Riemannian metric $g$ on a manifold 
\m\ under the nonlinear evolution equation
\beq
\label{RFeq}
\pl{g}{t}=-2\,\Ric(g(t)),
\eeq
for $t$ in some time interval $I\subset\R$.
Since then, the subject has developed steadily, and
has become established as an effective bridge between
analysis, geometry and topology (see for example
\cite{P1}, \cite{P2}, \cite{P3} and the overview in
\cite{RFnotes}).

The initial progress relevant to the present paper
was Hamilton's discovery in 1993 of the so-called Harnack 
quantities (see \cite{hamharnack} for more information)
and by 1995, Nolan Wallach \cite[\S 14]{formations}
had proposed that these quantities should arise as some
sort of curvature of some higher-dimensional manifold
or bundle associated to the Ricci flow.
This idea was developed by Chow and Chu \cite{CC1}
who considered the 
space-time manifold 
$\m \times I$, and defined a pair $(\tilde g, \widetilde \nabla)$ 
of a metric on its cotangent bundle \emph{degenerate} in the 
time direction and a $\tilde g$-compatible torsion-free connection 
(which is not unique owing to the degeneracy of $\tilde g$) so that 
the derivatives in the time coordinate direction of the
components of $(\tilde g, \widetilde \nabla)$ resemble
the formulae one can compute for the evolution of the components
of the metric and its Levi-Civita connection under Ricci flow.
(See \cite{CC1} for more details.)
It turns out that Hamilton's matrix Harnack quadratic is 
almost the Riemannian curvature of that space-time 
connection. An improved correspondence is established in 
the work of Chow and Knopf \cite{ChK} by considering
Ricci flow with a `cosmological term'. 
(An example of such a flow would be
$\bar g(\bar t) := \frac1{t} g(t)$, for $\bar t = \log t$, 
where $g(t)$ is a Ricci flow.)

In 2002, Perelman \cite[\S 6]{P1} 
made a new breakthrough along these lines
involving the construction of an essentially Ricci-flat 
manifold of dimension unbounded from above, which we now describe.
The starting point is a Ricci flow which once seen with respect
to a reverse time parameter $\tau:=C-t$ (for some $C\in\R$)
is defined for $\tau$ lying in some interval $I\subset\R_+$.
Let $N\in\N$ be a large natural number, and consider the
manifold $\tilde\m := \m\times I \times S^N$ equipped with 
the metric $\tilde g$ defined by 
$$\tilde g_{ij}=g_{ij}; \quad \tilde g_{00}=\frac{N}{2\tau}+R;
\quad \tilde g_{\alpha\beta}=\tau g_{\alpha\beta},$$
with all remaining metric coefficients $\tilde g_{0i}$, 
$\tilde g_{0\alpha}$ and $\tilde g_{i\alpha}$ equal to zero,
where $i,j$ are coordinate indices on the \m\ factor, 
$\alpha, \beta$ are those on the $S^N$ factor, $0$ represents 
the index of the time coordinate $\tau\in I$, the scalar curvature
is written $R$, and $g_{\alpha\beta}$ is the 
metric on the round $S^N$ of sectional curvature $\frac{1}{2N}$.

The significance of the manifold $(\tilde \m, \tilde g)$ is
that it is \emph{Ricci-flat} up to errors of order $\frac{1}{N}$.
This allowed Perelman to formally apply the Bishop-Gromov
comparison theorem in order to discover his \emph{reduced volume}
\cite{P1}. Perelman's variants of 
Hamilton's Harnack quantities can be recovered
from the full curvature tensor, up to errors
of order $\frac{1}{N}$.

More recently, the theory of 
optimal transportation has been
introduced into the study of Ricci flow, with papers by 
McCann and the second author \cite{MT}, the second author
\cite{Lopt} and then Lott \cite{lottOT}.
We give more details in Section \ref{OTsection},
but for now we mention that a notion of $\cl$-optimal
transportation was introduced in \cite{Lopt} which 
can be used to recover all the important monotonic quantities
for Ricci flow that were discovered by Perelman \cite{P1}
in his analysis of finite-time singularities for Ricci flow
(see \cite{Lopt} and \cite{lottOT}).

The starting point for this paper is the proposal of
John Lott that one might be able to make a formal 
justification of the results in \cite{Lopt} by applying
optimal transportation theory developed for manifolds of
positive Ricci curvature, directly to Perelman's
construction $(\tilde \m,\tilde g)$ (see also \cite{lottOT}).
This seems to be problematic using existing optimal 
transportation theory. However, consideration
of what alternative construction analogous to 
Perelman's $(\tilde \m,\tilde g)$
could lie behind the results
of \cite{Lopt} turns out to be fruitful; in this paper we
are thus led to the following theorem in which we construct 
the \emph{Canonical Shrinking Soliton} 
associated to a Ricci flow on \m.

\begin{thm}
\label{mainthm}
Suppose $g(\tau)$ is a (reverse) Ricci flow -- i.e. a
solution of $\pl{g}{\tau}=2\,\Ric(g(\tau))$ -- defined for
$\tau$ within a time interval $(a,b)\subset (0,\infty)$, on a
manifold $\m$ of dimension $n\in\N$, and with bounded curvature. 
Suppose 
 $N\in\N$ is sufficiently large
to give a positive definite metric $\hat g$ on
 $\hat\m:=\m\times  (a,b)$ defined by
$$\hat g_{ij}=\frac{g_{ij}}{\tau}; 
\qquad 
\hat g_{00}=\frac{N}{2\tau^3}+\frac{R}{\tau}-\frac{n}{2\tau^2};
\qquad
\hat g_{0i}=0,$$
where $i,j$ are coordinate indices on the \m\ factor, 
$0$ represents the index of the time coordinate $\tau\in (a,b)$, 
and the scalar curvature of $g$ is written as $R$.

Then up to errors of order $\frac{1}{N}$, the metric $\hat g$
is a gradient shrinking Ricci soliton on the higher dimensional
space $\hat\m$:
\beq
\label{solitonequation}
\Ric(\hat g)+\Hess_{\hat g} \left(\frac{N}{2\tau}\right)
\simeq\half \hat g,
\eeq
by which we mean that 
 the quantity 
$$N\left[\Ric(\hat g)+\Hess_{\hat g}\left(\frac{N}{2\tau}\right)
- \half \hat g\right]$$ 
is locally bounded independently of $N$, with respect to any fixed
metric on $\hat\m$.
\end{thm}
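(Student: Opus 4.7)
My plan is to verify the soliton identity \eqref{solitonequation} by direct computation in the three blocks of index types $(ij)$, $(0i)$, $(00)$, while carefully tracking the power of $\frac{1}{N}$ of each term. Set $f:=\frac{N}{2\tau}$, so the equation to verify reads $\Ric(\hat g)+\Hess_{\hat g}f\simeq\half \hat g$. Because $\hat g_{0i}=0$ the metric is block-diagonal, and the inverse is just $\hat g^{ij}=\tau g^{ij}$ together with $\hat g^{00}=1/\hat g_{00}=\frac{2\tau^3}{N}+O\bigl(\tfrac{1}{N^2}\bigr)$. This expansion is the key device that lets me separate contributions of order $N$, of order $1$, and negligible terms of order $\tfrac{1}{N}$ throughout the subsequent computation.

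I would then list the Christoffel symbols of $\hat g$. Using $\partial_\tau g_{ij}=2R_{ij}$, the most important ones are
$$
\hat\Ga^0_{ij}=-\half\hat g^{00}\Bigl(\tfrac{2R_{ij}}{\tau}-\tfrac{\hat g_{ij}}{\tau}\Bigr),\qquad
\hat\Ga^k_{ij}=\Ga^k_{ij}(g),\qquad
\hat\Ga^k_{0i}=g^{kl}R_{il}-\tfrac{1}{2\tau}\de^k_i,
$$
together with $\hat\Ga^0_{0i}$, $\hat\Ga^0_{00}=-\tfrac{3}{2\tau}+O(\tfrac{1}{N})$, and $\hat\Ga^i_{00}=-\half g^{ij}\partial_j R$, obtained similarly. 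Each symbol of the form $\hat\Ga^0_{\bullet\bullet}$ other than $\hat\Ga^0_{00}$ carries the factor $\hat g^{00}\sim\tfrac{1}{N}$; this factor is precisely compensated later when contracting with $\partial_0 f=-\tfrac{N}{2\tau^2}$, which is the single mechanism through which all the dimension-$N$ scaling is transported into the final identity.

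Next I would compute $\Ric(\hat g)$ and $\Hess_{\hat g}f$ block by block. For the $(ij)$-block one first checks $\hat R_{ij}=R_{ij}(g)+O(\tfrac{1}{N})$, exploiting the scale invariance of Ricci under $g\mapsto g/\tau$ plus corrections arising from the Christoffels above. Meanwhile $(\Hess f)_{ij}=\hat\Ga^0_{ij}\cdot\tfrac{N}{2\tau^2}$ evaluates to $-R_{ij}+\half\hat g_{ij}+O(\tfrac{1}{N})$, so the sum matches $\half\hat g_{ij}$ to the required order. For the $(0i)$-block, the leading parts of $\hat R_{0i}$ and $(\Hess f)_{0i}$ cancel against each other thanks to the twice-contracted second Bianchi identity $\grad^jR_{ij}=\half\grad_i R$.

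The main obstacle is the $(00)$-block, where $\hat g_{00}$, $\hat R_{00}$ and $(\Hess f)_{00}$ each contain terms of order $N$ and of order $1$, and the cancellations must work out simultaneously at both orders. Here the specific correction terms $\tfrac{R}{\tau}-\tfrac{n}{2\tau^2}$ inserted into $\hat g_{00}$ play their decisive role: they are tuned so that, after invoking the reverse Ricci-flow evolutions $\partial_\tau R=-\lap R-2|\Ric|^2$ and $\tr_g\partial_\tau g=2R$, every contribution of order $N$ and of order $1$ on the two sides of \eqref{solitonequation} cancels. The bounded-curvature hypothesis then guarantees that every quantity absorbed into the $O(\tfrac{1}{N})$ remainder is locally uniformly bounded on $\hat\m$, so that the resulting error multiplied by $N$ remains bounded independently of $N$, as claimed.
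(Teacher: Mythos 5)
Your proposal is correct and follows essentially the same route as the paper: exact computation of the Christoffel symbols of $\hat g$ (your formulas agree with the paper's), then block-by-block evaluation of $\Ric(\hat g)$ and $\Hess_{\hat g}(\frac{N}{2\tau})$ up to $O(\frac1N)$ using the expansion $\hat g^{00}=\frac{2\tau^3}{N}+O(\frac{1}{N^2})$, the contracted Bianchi identity for the $(0i)$-block, and the evolution equation $R_\tau=-\lap R-2|\Ric|^2$ for the $(00)$-block. (Only a cosmetic slip: $\hat R_{00}\simeq -\frac{R_\tau}{2}-\frac{R}{2\tau}$ is actually $O(1)$; the order-$N$ terms in the $(00)$-block come solely from the Hessian and from $\half\hat g_{00}$, and they cancel as you describe.)
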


One aspect of this theorem is that given
a Ricci flow over the time interval $(a,b)$, it is then 
natural to introduce an additional time parameter
and consider Ricci flow using $\m\times (a,b)$
as the underlying manifold.

This theorem serves as an example of an application of the
theory of optimal transportation to prove a new result in
a different field. The route between the theorems of optimal
transportation and this construction is described in Sections
\ref{OTsection} and \ref{heuristicssect}.
It will become apparent, from Section \ref{heuristicssect}
and \cite{Lopt}, that our Canonical Shrinking Soliton encodes
various monotonic quantities which underpin Perelman's
work on Ricci flow \cite{P1,P2,P3}, 
including entropies and quantities involving $\cl$-length
(see Section \ref{OTsection}). 
In Section \ref{steadysect} we describe a steady 
space-time soliton
construction which will generate other monotonic quantities.

\brmk
Our construction also encodes Perelman's variants of 
Hamilton's Harnack quantities.
More precisely, the components of the $(4,0)$ curvature tensor 
$\tau Rm( \hat g)$ coincide (up to errors of order $\frac1{N}$) 
with the components of Perelman's version of the
matrix Harnack expression \cite{hamharnack, P1}.
In \cite{CT2} we will explain how a variant of these
ideas can be used to prove new Harnack inequalities.
\ermk

\brmk
Although each side of \eqref{solitonequation} evaluated
on the pair $(\pl{}{\tau},\pl{}{\tau})$ will have magnitude
of order $N$, the theorem tells us that their difference 
does not even have any terms of order
$1$, only those of order $\frac{1}{N}$.
All errors disappear when the original Ricci flow $g(\tau)$
is a homothetically shrinking Einstein manifold, shrinking
to nothing at $\tau=0$.
\ermk

\emph{Acknowledgements:} We thank Robert McCann for discussions
relating to an observation of Tom Ilmanen used in Section
\ref{OTsection}. Part of this work was carried out at the 
Centre de Recerca Matem\`atica, Barcelona, and the Institut
Henri Poincar\'e, Paris, and we thank these institutions
for their hospitality. This work was partly supported by
the Leverhulme Trust. The first author was partly 
supported  by DGI(Spain) and FEDER Project MTM2007-65852.

\section{The calculations}

In this section, we give exact formulae for the 
Christoffel symbols of $\hat g$ from Theorem \ref{mainthm}, 
and approximate 
formulae for its Ricci curvatures and for the Hessian of 
$\frac{N}{2\tau}$ with respect to $\hat g$. 
Theorem \ref{mainthm} will then 
be seen to follow easily.

\begin{prop}
In the setting of Theorem \ref{mainthm}, if $\Ga^i_{jk}$
are the Christoffel symbols of $g(\tau)$ at some point
$x\in\m$, then the Christoffel symbols of $\hat g$
at $(x,\tau)$ are given by
$$
\hat \Ga^i_{jk}=\Ga^i_{jk};\quad
\hat \Ga^i_{j0}={R^i}_j-\frac{{\de^i}_j}{2\tau};\quad
\hat \Ga^i_{00}=-\half g^{ij}\pl{R}{x^j};\quad
$$
$$
\hat \Ga^0_{jk}=\hat g_{00}^{-1}
\left(\frac{g_{jk}}{2\tau^2}-\frac{R_{jk}}{\tau}\right);\quad
\hat \Ga^0_{i0}=\frac{1}{2\tau} \hat g_{00}^{-1}\pl{R}{x^i};\quad
\hat \Ga^0_{00}=\half \hat g_{00}^{-1}
\left[-\frac{3N}{2\tau^4}-\frac{R}{\tau^2}+\frac{R_\tau}{\tau}
+\frac{n}{\tau^3}\right]
$$
\end{prop}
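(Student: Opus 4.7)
The proof is a direct, if slightly fiddly, computation from the definition
$$\hat\Ga^\gamma_{\alpha\beta}=\half \hat g^{\gamma\delta}
\bigl(\partial_\alpha \hat g_{\beta\delta}
+\partial_\beta \hat g_{\alpha\delta}
-\partial_\delta \hat g_{\alpha\beta}\bigr),$$
so my plan is to set up the three ingredients (inverse metric, partial derivatives of the components, then apply the formula) and then read off the six cases.

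First I would record the inverse of $\hat g$. Because $\hat g_{0i}=0$, the matrix is block-diagonal in the $0$ vs.\ $i$ split, so immediately $\hat g^{ij}=\tau g^{ij}$, $\hat g^{00}=\hat g_{00}^{-1}$, $\hat g^{0i}=0$. This is what makes every case below collapse to a single term.

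Next I would compute the partial derivatives of $\hat g_{\alpha\beta}$. The spatial derivatives are immediate: $\partial_k\hat g_{ij}=\partial_k g_{ij}/\tau$, $\partial_k\hat g_{00}=\partial_k R/\tau$, and everything involving $\hat g_{0i}$ vanishes. For the $\tau$-derivatives, the key input is the (reverse) Ricci flow equation $\partial_\tau g_{ij}=2R_{ij}$, which yields
$$\partial_0 \hat g_{ij}=\frac{2R_{ij}}{\tau}-\frac{g_{ij}}{\tau^2},\qquad
\partial_0 \hat g_{00}=-\frac{3N}{2\tau^4}-\frac{R}{\tau^2}+\frac{R_\tau}{\tau}+\frac{n}{\tau^3},$$
the second being just an application of the quotient rule to the explicit formula for $\hat g_{00}$, where $R_\tau:=\partial_\tau R$ is left unexpanded.

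Finally I would plug into the Christoffel formula six times, exploiting the block-diagonal inverse to kill mixed index contributions. The $\hat\Ga^i_{jk}$ case involves only $\hat g^{il}=\tau g^{il}$ paired with $\partial_{\cdot}\hat g_{\cdot\cdot}=\partial_\cdot g_{\cdot\cdot}/\tau$, so the $\tau$'s cancel and one recovers $\Ga^i_{jk}$. The $\hat\Ga^i_{j0}$ case uses $\partial_0\hat g_{jl}$ from above, producing $R^i{}_j-\frac{\de^i_j}{2\tau}$. The $\hat\Ga^i_{00}$ case only sees $-\partial_l\hat g_{00}=-\partial_l R/\tau$ and gives $-\half g^{ij}\partial_j R$. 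The three $\hat\Ga^0_{\cdot\cdot}$ formulas use $\hat g^{00}=\hat g_{00}^{-1}$ paired, respectively, with $-\partial_0\hat g_{jk}$, with $\partial_i\hat g_{00}$, and with $\half\partial_0\hat g_{00}$; matching against the expressions recorded in step two gives exactly the stated formulas.

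There is no real obstacle here beyond bookkeeping: one must only be careful with the sign and factor of $1/\tau^2$ inside $\partial_0\hat g_{ij}$ (which produces the $-\frac{\de^i_j}{2\tau}$ in $\hat\Ga^i_{j0}$ and the $\frac{g_{jk}}{2\tau^2}$ term in $\hat\Ga^0_{jk}$), and with differentiating the three distinct powers of $\tau$ appearing in $\hat g_{00}$ for $\hat\Ga^0_{00}$. No use of the soliton structure or of $N$ being large is required for this proposition; the largeness of $N$ only enters when one substitutes these Christoffel symbols into the curvature and Hessian computations of the next proposition.
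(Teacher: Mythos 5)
Your computation is correct and is exactly the route the paper takes: the paper simply states that the proposition follows by a straightforward computation from the definition of the Christoffel symbols together with the (reverse) Ricci flow equation $\partial_\tau g_{ij}=2R_{ij}$, which is precisely what you carry out, including the correct block-diagonal inverse and $\tau$-derivatives of the components. All six formulas check out, and your closing remark that the size of $N$ plays no role at this stage is also consistent with the paper.
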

This is a straightforward computation from the definition of the
Christoffel symbols
$$\hat\Ga^a_{bc}:=\half \hat g^{ad}\left(
\pl{\hat g_{cd}}{x^b}+\pl{\hat g_{bd}}{x^c}-\pl{\hat g_{bc}}{x^d}
\right),$$
where $a, b, c, d$ are arbitrary indices,
and the equation of Ricci flow. Using the
standard formula for the coefficients of the Ricci curvature
$$\hat R_{ab}=\pl{\hat\Ga^c_{ab}}{x^c} -\pl{\hat\Ga^c_{ac}}{x^b}
+\hat\Ga^c_{ab}\hat\Ga^d_{cd}-\hat\Ga^d_{ac}\hat\Ga^c_{bd},$$
the formula for the coefficients of $\Hess_{\hat g}(f)$
$$\hat\grad^2_{ab}(f)=\plndd{f}{x^a}{x^b}-\pl{f}{x^c}\hat\Ga^c_{ab},$$
the equation for the evolution of $R$
$$R_\tau+\lap R+2|\Ric|^2=0,$$
(see for example \cite[Proposition 2.5.4]{RFnotes})
and the contracted Bianchi identity
$$\grad_i {R^i}_j=\half \grad_j R,$$
one readily verifies the following:

\begin{prop}
\label{ric_hess_prop}
Fixing $\tau>0$, a time at which the Ricci flow exists, and
fixing local coordinates $\{x^i\}$ in a neighbourhood $U$ of 
some $p\in\m$, then in any neighbourhood 
$V\subset\subset U\times (a,b)$ of $(p,\tau)$, we have
$$\hat R_{ij}\simeq R_{ij};\qquad
\hat R_{i0}\simeq -\half \grad_i R;\qquad
\hat R_{00}\simeq -\frac{R_\tau}{2}-\frac{R}{2\tau},$$
where $\simeq$ denotes equality of the coefficients
up to an error bounded in magnitude by $\frac{C}{N}$, 
with $C>0$ a constant independent of $N$ 
(but depending on $V$ and the choice of coordinates).
Moreover,  
we have
$$\hat\grad^2_{ij}({\textstyle \frac{N}{2\tau}})
\simeq \frac{g_{ij}}{2\tau}-R_{ij};\qquad
\hat\grad^2_{i0}({\textstyle \frac{N}{2\tau}})
\simeq \frac{\grad_i R}{2};\qquad
\hat\grad^2_{00}({\textstyle \frac{N}{2\tau}})
\simeq \frac{N}{4\tau^3}+\frac{R}{\tau}
-\frac{n}{4\tau^2}+\frac{R_\tau}{2}.$$
\end{prop}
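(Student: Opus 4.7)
The plan is to substitute the Christoffel symbols from the previous proposition into the formulas for $\hat R_{ab}$ and $\hat\grad^2_{ab}(N/(2\tau))$ quoted in the preamble, and then to carefully track which contributions survive modulo terms of order $\frac{1}{N}$. The key bookkeeping device is the geometric-series expansion
$$\hat g^{00}\;=\;\frac{2\tau^3}{N}+\frac{1}{N^2}\cdot O(1),$$
obtained from $\hat g_{00}=\frac{N}{2\tau^3}+\frac{R}{\tau}-\frac{n}{2\tau^2}$. This immediately shows that $\hat\Ga^0_{jk}$ and $\hat\Ga^0_{i0}$ are of order $\frac{1}{N}$, while $\hat\Ga^i_{jk}$, $\hat\Ga^i_{j0}$ and $\hat\Ga^i_{00}$ are of order $1$; the symbol $\hat\Ga^0_{00}$ is also of order $1$, thanks to the precise cancellation between the leading $\frac{2\tau^3}{N}$ in $\hat g^{00}$ and the dominant $-\frac{3N}{2\tau^4}$ inside its bracket. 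These orders persist under spatial and $\tau$-derivatives.

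For $\hat R_{ij}$, expanding the Ricci formula and splitting every summation index into its spatial and temporal pieces, every term containing at least one factor of $\hat\Ga^0_{jk}$ or $\hat\Ga^0_{i0}$ (or a derivative thereof) is at most of order $\frac{1}{N}$. The remaining $O(1)$ contributions recombine into the spatial Ricci tensor built from $\hat\Ga^k_{ij}=\Ga^k_{ij}$, which is $R_{ij}$. For $\hat R_{i0}$ the analogous cull leaves terms involving only the unit-order Christoffels and the $\tau$-derivative of $\Ga^k_{ij}$; invoking the standard formula $\pl{}{\tau}\Ga^k_{ij}=\grad_i{R^k}_j+\grad_j{R^k}_i-\grad^k R_{ij}$ (which follows from $\pl{g}{\tau}=2\,\Ric$) together with the contracted Bianchi identity $\grad_i{R^i}_j=\half\grad_j R$ collapses the remainder to $-\half\grad_i R$.

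The computation of $\hat R_{00}$ is the most delicate. After discarding contributions of order $\frac{1}{N}$, the surviving $O(1)$ terms come in two shapes: a Laplacian piece from $\partial_k\hat\Ga^k_{00}=-\half\lap R$, and a Ricci-squared piece from $\hat\Ga^k_{0j}\hat\Ga^j_{0k}$ which, after substituting $\hat\Ga^i_{j0}={R^i}_j-\frac{{\de^i}_j}{2\tau}$, produces $|\Ric|^2$ plus lower-order scalar pieces in $\frac{1}{\tau}$. The evolution equation $R_\tau+\lap R+2|\Ric|^2=0$ quoted in the preamble is precisely what is needed to convert these into $-\frac{R_\tau}{2}-\frac{R}{2\tau}$, once the remaining $\tau$-derivative contribution coming from $\hat\Ga^0_{00}$ is handled explicitly.

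For the Hessian of $f:=\frac{N}{2\tau}$, the formula $\hat\grad^2_{ab}f=\plndd{f}{x^a}{x^b}-\hat\Ga^c_{ab}\pl{f}{x^c}$ simplifies because $\pl{f}{x^i}=0$ and $\pl{f}{\tau}=-\frac{N}{2\tau^2}$, yielding $\hat\grad^2_{ab}f=\plndd{f}{x^a}{x^b}+\hat\Ga^0_{ab}\cdot\frac{N}{2\tau^2}$. For $(a,b)=(i,j)$ and $(i,0)$ the $\frac{1}{N}$ factor hidden in $\hat g^{00}$ inside $\hat\Ga^0_{ab}$ multiplies against $\frac{N}{2\tau^2}$ to leave the stated $O(1)$ expressions immediately. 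For $(0,0)$ one must retain the $\frac{1}{N^2}$ correction in the expansion of $\hat g^{00}$ in order to produce the $\frac{R}{\tau}+\frac{R_\tau}{2}-\frac{n}{4\tau^2}$ pieces beyond the leading $\frac{N}{4\tau^3}$. The main obstacle throughout is organisational: in the $\hat R_{00}$ and $\hat\grad^2_{00}$ computations one must resist dropping terms that look negligible in isolation but which, when paired with the $N$-order factors in $\hat\Ga^0_{00}$ or $\hat g_{00}$, contribute at exactly the $O(1)$ level that the statement pins down.
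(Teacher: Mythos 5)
Your proposal is correct and takes essentially the same route as the paper, which likewise verifies the proposition by substituting the exact Christoffel symbols into the coordinate formulas for $\hat R_{ab}$ and $\hat\grad^2_{ab}$, expanding $\hat g^{00}=\frac{2\tau^3}{N}+O(N^{-2})$ (keeping the second-order term exactly where you say it matters, in $\hat\grad^2_{00}$), and closing with the evolution equation for $R$ and the contracted Bianchi identity. One small imprecision in your $\hat R_{00}$ sketch: the $\pl{\hat\Ga^0_{00}}{\tau}$ contributions cancel between the two derivative terms of the Ricci formula, and the needed $-R_\tau-\frac{n}{2\tau^2}$ really comes from $-\pl{\hat\Ga^k_{0k}}{\tau}=-\pl{}{\tau}\left(R-\frac{n}{2\tau}\right)$ together with the order-one quadratic term $\hat\Ga^0_{00}\hat\Ga^d_{0d}$, but carrying out your plan term by term surfaces this automatically and the result is unaffected.
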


By combining the formulae of Proposition \ref{ric_hess_prop}
and the definition of $\hat g$, we deduce Theorem \ref{mainthm}.

\section{Optimal Transportation on Ricci flows}
\label{OTsection}

Suppose that $(\m,g)$ is a closed (compact, no boundary) 
Riemannian manifold, and $\nu_1$ and $\nu_2$ are two
Borel probability measures on \m. 
For $p\in [1,\infty)$,
we define 
the $p$-Wasserstein distance 
$W_p$ between $\nu_1$ and $\nu_2$ to be
\beq
\label{Wpdef}
W_p^{g}(\nu_1,\nu_2):=\left[\inf_{\pi\in\Ga(\nu_1,\nu_2)}
\int_{\m\times\m} d^p(x,y) d\pi(x,y)\right]^\frac{1}{p},
\eeq
where $d(\cdot,\cdot)$ is the Riemannian distance function
induced by $g$
and $\Ga(\nu_1,\nu_2)$ is the space of Borel probability 
measures on $\m\times\m$ with marginals $\nu_1$ and $\nu_2$.

A basic principle in the subject -- see Sturm and von Renesse 
\cite{SvR} and the references therein --
is that two probability measures evolving under an
appropriate diffusion
equation should get closer in the Wasserstein sense provided
the manifold satisfies some curvature condition, most famously
positive Ricci curvature. 

In \cite{MT}, McCann and the second author showed that this
type of contractivity 
on an \emph{evolving} manifold
$(\m,g(\tau))$ \emph{characterises} super-solutions of the Ricci flow
(parametrised backwards in time) by which we mean solutions to
\beq
\label{superRF}
\pl{g}{\tau}\leq 2\,\Ric(g(\tau)).
\eeq
Here the relevant notion of diffusion on an evolving
manifold $(\m,g(\tau))$ moves the probability density
$u$ (with respect to the evolving Riemannian measure $\mu_{g(\tau)}$)
by the parabolic equation
$$\pl{u}{\tau}=\lap_{g(\tau)} u - 
\half\tr\left(\pl{g}{\tau}\right)u,$$
and we refer to the resulting one-parameter families of 
measures simply as \emph{diffusions}.
This way, if we define a local top-dimensional form
$\om(\tau):=u\,dV_{g(\tau)}$, then 
\beq
\label{omevol}
\pl{\om}{\tau}=\lap_{g(\tau)}\om,
\eeq
where $\lap$ is here the connection Laplacian,
and thus the evolution of the measures corresponds 
to Brownian motion.
The following characterisation was proved for the $W_2$ distance
in \cite{MT}. Tom Ilmanen has pointed out to us (via Robert McCann)
that this extends to the case of $W_1$ distance, giving:

\begin{thm} (cf. \cite[Theorem 2]{MT})
\label{W1thm}
Suppose that $\m$ is a closed manifold equipped with 
a smooth family of metrics $g(\tau)$ for 
$\tau\in [\tau_1,\tau_2]\subset \R$.
Then the following are equivalent:
\begin{enumerate}[($A$)]
\item
$g(\tau)$ is a super Ricci flow (i.e. satisfies \eqref{superRF});
\item
whenever $\tau_1<a<b<\tau_2$ and 
$\nu_1(\tau)$, $\nu_2(\tau)$ are 
diffusions (as defined above) for $\tau\in (a,b)$,
the function $\tau\mapsto W_1^{g(\tau)}(\nu_1(\tau),\nu_2(\tau))$ 
is weakly decreasing
in $\tau\in (a,b)$;
\item
whenever $\tau_1<a<b<\tau_2$ and $f:\m\times (a,b)\to\R$
is a solution to 
$-\frac{\partial f}{\partial \tau}=\Delta_{g(\tau)}f$, 
the 
Lipschitz constant of $f(\cdot,\tau)$ with respect to $g(\tau)$
is weakly increasing in $\tau$.
\end{enumerate}
\end{thm}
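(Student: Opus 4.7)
The plan is to close the cycle $(A) \Rightarrow (C) \Rightarrow (B) \Rightarrow (A)$, since the dual characterisation $W_1(\nu_1,\nu_2) = \sup\{\int f\,d(\nu_1-\nu_2) : \mathrm{Lip}_g f \leq 1\}$ from Kantorovich--Rubinstein makes $W_1$ and Lipschitz constants of functions directly interchangeable; this is arguably cleaner than the $W_2$ case treated in \cite{MT}, since we do not need Otto-type calculus. The main work lies in (A)$\Leftrightarrow$(C); once that is established, the equivalence with (B) is mostly a matter of duality.

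For (A)$\Rightarrow$(C), let $f$ solve $-\partial_\tau f=\Delta_{g(\tau)}f$ and compute, using $\partial_\tau g^{ij} \geq -2\Ric^{ij}$ (which follows from \eqref{superRF}) together with Bochner's formula $\Delta|\nabla f|^2 = 2|\Hess f|^2 + 2\langle\nabla f,\nabla\Delta f\rangle + 2\Ric(\nabla f,\nabla f)$:
$$
(\partial_\tau + \Delta)|\nabla f|^2
= 2|\Hess f|^2 + \bigl(2\Ric - \partial_\tau g\bigr)(\nabla f,\nabla f)
\;\geq\; 0.
$$
Hence $|\nabla f|^2$ is a subsolution of the backward heat equation $-\partial_\tau u=\Delta u$, so by the (backward) maximum principle its spatial supremum is weakly increasing in $\tau$, which is precisely the statement that the Lipschitz constant of $f(\cdot,\tau)$ is weakly increasing.

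For (C)$\Rightarrow$(B), given diffusions $\nu_1(\tau), \nu_2(\tau)$ and $\tau_1 < \tau_2$, pick (via Kantorovich--Rubinstein) an almost-optimal $g(\tau_2)$-Lipschitz-$1$ function $f_2$ for $W_1^{g(\tau_2)}(\nu_1(\tau_2),\nu_2(\tau_2))$, and let $f(\cdot,\tau)$ solve $-\partial_\tau f=\Delta f$ backward with $f(\cdot,\tau_2)=f_2$. The duality between the diffusion equation on forms $\partial_\tau\om=\Delta\om$ and the backward heat equation on functions yields
$$\tfrac{d}{d\tau}\int f\,d\nu_i(\tau) = 0,$$
so $\int f(\cdot,\tau)\,d(\nu_1(\tau) - \nu_2(\tau))$ is $\tau$-independent. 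By (C), $\mathrm{Lip}_{g(\tau_1)} f(\cdot,\tau_1)\leq 1$, and hence $W_1^{g(\tau_1)}(\nu_1(\tau_1),\nu_2(\tau_1))$ is bounded below by this common integral value, which is $\simeq W_1^{g(\tau_2)}(\nu_1(\tau_2),\nu_2(\tau_2))$. Conversely, (B)$\Rightarrow$(C) follows the same duality in reverse: for a fixed $f$ solving the backward heat equation, apply (B) to diffusions smoothly approximating point masses so that Kantorovich--Rubinstein identifies $\int f\,d(\nu_1-\nu_2)/W_1$ with the Lipschitz constant of $f$ in the appropriate limit.

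The main obstacle, as in \cite{MT}, is (C)$\Rightarrow$(A), since this runs a maximum-principle argument in reverse: failure of \eqref{superRF} at a single point $(p,\tau_0)$ in a direction $v\in T_p\m$ must be amplified into the failure of global Lipschitz monotonicity for some $f$. The plan is to construct a compactly supported $f_0$ on $\m$ at time $\tau_0$ whose function $|\nabla f_0|^2$ attains a strict spatial maximum exactly at $p$, with $\nabla f_0(p)=v$ and $\Hess f_0(p)=0$ (e.g., a cutoff of a local linear function in geodesic normal coordinates), evolve it by $-\partial_\tau f = \Delta f$ in a short time interval around $\tau_0$, and note that for $\tau$ close to $\tau_0$ the maximum of $|\nabla f|^2$ is still realised near $p$ by continuity. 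At the maximum $\Delta|\nabla f|^2\leq 0$, so (C) forces
$$0 \;\leq\; \partial_\tau |\nabla f|^2\big|_{(p,\tau_0)} \;\leq\; (\partial_\tau+\Delta)|\nabla f|^2\big|_{(p,\tau_0)} \;=\; (2\Ric - \partial_\tau g)(v,v),$$
using $\Hess f(p,\tau_0)=0$ in the Bochner identity. Since $v$ was arbitrary this gives \eqref{superRF}. The delicate point is ensuring that, after the backward heat evolution, the spatial maximum of $|\nabla f|^2$ stays close enough to $p$ to give the desired pointwise inequality; this is handled by short-time smoothness estimates and the strict nondegeneracy of the constructed maximum at $\tau_0$.
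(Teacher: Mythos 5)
Your steps (A)$\Rightarrow$(C) (Bochner formula plus maximum principle for the time-dependent metric) and (C)$\Rightarrow$(B) (Kantorovich--Rubinstein duality, conservation of $\int \vph\,d\nu_i$ under the dual pair of equations, and re-insertion of the evolved potential as a competitor) are correct; indeed your (C)$\Rightarrow$(B) is in substance exactly the argument the paper records (attributed to Ilmanen) for its only written implication, $(A)\Rightarrow(B)$, while it refers all the remaining implications to \cite{MT}. Your sketch of (B)$\Rightarrow$(C), testing against diffusions started at approximate point masses, can also be made rigorous along the lines you indicate.

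The genuine gap is in (C)$\Rightarrow$(A). At a spatial maximum of $|\nabla f|^2$ you correctly note $\Delta|\nabla f|^2\leq 0$, but this gives $\partial_\tau|\nabla f|^2 \geq (\partial_\tau+\Delta)|\nabla f|^2$, not $\leq$ as in your displayed chain; so from (C) you only learn $\partial_\tau|\nabla f|^2(p,\tau_0)\geq 0$, and combined with $\Delta|\nabla f|^2(p,\tau_0)\leq 0$ this says nothing about the sign of $(2\Ric-\partial_\tau g)(v,v)=\partial_\tau|\nabla f|^2(p,\tau_0)+\Delta|\nabla f|^2(p,\tau_0)$. Insisting on a \emph{strict, nondegenerate} maximum makes matters worse, since it forces $\Delta|\nabla f_0|^2(p)$ to be strictly negative. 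The missing ingredient is a lower bound on $\Delta|\nabla f_0|^2(p)$: build $f_0$ as a linear function in normal coordinates plus a cubic correction, cut off away from $p$, so that $\nabla f_0(p)=v$, $\Hess f_0(p)=0$, $|\nabla f_0|^2\leq |v|^2-\delta|x|^2$ near $p$ with global maximum at $p$, where $\delta>0$ is chosen small compared with the assumed quantitative failure $(2\Ric-\partial_\tau g)(v,v)=-c<0$; then $\partial_\tau|\nabla f|^2(p,\tau_0)\leq -c+C_n\delta<0$, so for $\tau$ slightly below $\tau_0$ one gets $\sup_x|\nabla f|^2(x,\tau)\geq |\nabla f|^2(p,\tau)>|\nabla f|^2(p,\tau_0)=\sup_x|\nabla f|^2(x,\tau_0)$, contradicting (C). Note also that with data prescribed at time $\tau_0$ the equation $-\partial_\tau f=\Delta_{g(\tau)} f$ is solvable only for $\tau\leq\tau_0$ (for increasing $\tau$ it is a backward heat equation), so evolving ``in a short time interval around $\tau_0$'' is not available; as the argument just sketched shows, the one-sided interval suffices, and there is then no need to track the location of the maximum for $\tau<\tau_0$ at all.
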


\begin{proof}
All implications in this theorem are proved exactly as in
\cite{MT} except for  $(A)\implies (B)$
whose proof, pointed out by Ilmanen, we now describe.
Suppose that $g(\tau)$ is a super Ricci flow, and 
that $\nu_1(\tau)$ and $\nu_2(\tau)$ are two diffusions,
all defined for $\tau$ in a neighbourhood of $\tau_0\in (a,b)$.
By Kantorovich-Rubinstein duality (see e.g. \cite[\S 1.2.1]{villani})
we have
\beq
\label{KRduality}
\begin{aligned}
W_1^{g(\tau)}(\nu_1(\tau),\nu_2(\tau))&:=\max\bigg\{
\int_\m \vph d\nu_1(\tau) - \int_\m \vph d\nu_2(\tau)\ \bigg|\ 
\vph:\m\to\R \text{ is Lipschitz }\\
&\qquad\qquad\qquad\qquad\qquad \text{ and }
\|\vph\|_{Lip}\leq 1 \text{ with respect to }g(\tau)\bigg\}.
\end{aligned}
\eeq
Let $\vph_0:\m\to\R$ be a function which achieves the
maximum in this variational problem at time $\tau_0$, 
and extend $\vph_0$
to a function $\vph:\m\times [\tau_0-\ep,\tau_0]\to\R$
for some $\ep>0$ by solving the equation
$$-\pl{\vph}{\tau}=\lap_{g(\tau)}\vph.$$
By the implication $(A)\implies (C)$ of the theorem (proved in
\cite{MT}) for all $\tau\in [\tau_0-\ep,\tau_0]$ we have
$\|\vph(\cdot,\tau)\|_{Lip}\leq 1$ and therefore $\vph(\cdot,\tau)$
can be used as a competitor in the variational problem
\eqref{KRduality} to see that
$$W_1^{g(\tau)}(\nu_1(\tau),\nu_2(\tau))\geq
\int_\m \vph(\cdot,\tau) d\nu_1(\tau) 
- \int_\m \vph(\cdot,\tau) d\nu_2(\tau).$$
But by an integration by parts formula \cite[\S 6.3]{RFnotes}
we know that the functions
$$\tau\mapsto \int_\m \vph(\cdot,\tau) d\nu_1(\tau)\qquad
\text{and}\qquad
\tau\mapsto \int_\m \vph(\cdot,\tau) d\nu_2(\tau)$$
are each independent of $\tau$, so we deduce that
$$W_1^{g(\tau)}(\nu_1(\tau),\nu_2(\tau))\geq
W_1^{g(\tau_0)}(\nu_1(\tau_0),\nu_2(\tau_0)).$$
\end{proof}

In the next section, we will demonstrate how the manifold
$(\hat\m,\hat g)$ of Theorem \ref{mainthm} 
arises naturally by trying to reconcile Theorem
\ref{W1thm} with 
the main result proved by the second author in 
\cite{Lopt}, which we now rephrase into the most suggestive 
form for our present purposes.
The idea of $\cl$-optimal transportation \cite{Lopt} is
to transport a probability measure from one time slice
of a Ricci flow to another, using a cost function derived
from Perelman's $\cl$-length.
More precisely, given a time interval 
$[\tau_1,\tau_2]\subset (0,\infty)$ in the domain of definition
of the Ricci flow, we consider the cost function 
$c:\m\times\m\to\R$ induced by the Lagrangian 
$L(x,v,\tau):=\sqrt{\tau}(R(x,\tau)+|v|^2-\frac{n}{2\tau})$
which gives 
$$c(x,y)=\inf_\ga \int_{\tau_1}^{\tau_2} 
\sqrt{\tau}\left(R(\ga(\tau),\tau)+|\ga '(\tau)|^2-\frac{n}{2\tau}
\right)d\tau,$$
where the infimum is taken over all $C^1$ curves
$\ga:[\tau_1,\tau_2]\to\m$ for which $\ga(\tau_1)=x$
and $\ga(\tau_2)=y$.
Using the definitions from \cite[\S 7]{P1} and \cite{Lopt}
$$\cl(\ga)=\int_{\tau_1}^{\tau_2} 
\sqrt{\tau}(R(\ga(\tau),\tau)+|\ga '(\tau)|^2)d\tau;
\qquad Q(x,\tau_1;y,\tau_2)=\inf_\ga \cl(\ga),$$
where the infimum is again over curves $\ga$ as above,
we can write
$$c(x,y)=Q(x,\tau_1;y,\tau_2)-n(\sqrt{\tau_2} - \sqrt{\tau_1}).$$
This cost function then induces a distance from one
Borel probability measure $\nu_1$ (viewed as existing at time
$\tau_1$) to another $\nu_2$ (viewed at time $\tau_2$) via 
the formula
\beq
\begin{aligned}
\cd(\nu_1,\tau_1;\nu_2,\tau_2)&=
\inf_{\pi\in\Ga(\nu_1,\nu_2)}\int_{\m\times\m}Q(x,\tau_1;y,\tau_2)
d\pi(x,y) - n(\sqrt{\tau_2} - \sqrt{\tau_1})\\
&=: V(\nu_1,\tau_1;\nu_2,\tau_2) - n(\sqrt{\tau_2} - \sqrt{\tau_1}),
\end{aligned}
\eeq
using $V$ as defined in \cite{Lopt}.
From the following theorem, one can recover \cite{Lopt} most of
Perelman's monotonic quantities (both involving
entropies and $\cl$-length) which are central in his 
work on Ricci flow \cite{P1,P2,P3}.

\begin{thm} (Equivalent to \cite[Theorem 1.1]{Lopt}.)
\label{loptthm}
Suppose that $g(\tau)$ is a Ricci flow on a closed manifold
$\m$ over an open time interval containing 
$[\bar{\tau}_1,\bar{\tau}_2]$,
and suppose that $\nu_1(\tau)$ and $\nu_2(\tau)$ are two 
diffusions (in the same sense as in Theorem \ref{W1thm}) 
defined for $\tau$ in neighbourhoods
of $\bar{\tau}_1$ and $\bar{\tau}_2$ respectively.
Then the distance between the diffusions decays
in the sense that for $s\geq 1$ sufficiently close to $1$,
$$\cd(\nu_1(s\bar\tau_1),s\bar\tau_1;\nu_2(s\bar\tau_2),s\bar\tau_2)
\leq s^{-\half}
\cd(\nu_1(\bar{\tau}_1),\bar{\tau}_1;
\nu_2(\bar{\tau}_2),\bar{\tau}_2).$$
\end{thm}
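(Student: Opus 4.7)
My plan is to view this as a reformulation of \cite[Theorem 1.1]{Lopt}; the work amounts to combining the monotonicity proved there with the parabolic scaling of $\cd$ along a Ricci flow, which together produce the factor $s^{-1/2}$.

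The first step is to record the scaling identity. Given $s>0$, the rescaled metric $\tilde g(u) := s^{-1} g(su)$ is again a reverse Ricci flow. Substituting $\tau = su$ and $\tilde\ga(u) := \ga(su)$ in the definition of $\cl(\ga)$, and using $|\tilde\ga'(u)|^2_{\tilde g(u)} = s|\ga'(su)|^2_{g(su)}$, $R_{\tilde g} = sR_g$, and $\sqrt{\tau}\,d\tau = s^{3/2}\sqrt{u}\,du$, one obtains $Q_g(x, s\tau_1; y, s\tau_2) = \sqrt{s}\, Q_{\tilde g}(x, \tau_1; y, \tau_2)$. The correction term scales the same way, since $n(\sqrt{s\tau_2} - \sqrt{s\tau_1}) = \sqrt{s}\, n(\sqrt{\tau_2} - \sqrt{\tau_1})$, so for any Borel probability measures $\mu_1, \mu_2$,
$$\cd_g(\mu_1, s\tau_1; \mu_2, s\tau_2) = \sqrt{s}\, \cd_{\tilde g}(\mu_1, \tau_1; \mu_2, \tau_2).$$
Moreover, diffusions transform covariantly under the same rescaling: $\nu_i^{\tilde g}(u) := \nu_i^g(su)$ solves the diffusion equation in the $\tilde g$-flow, since $\Delta_{\tilde g} = s\Delta_g$ and $R_{\tilde g} = sR_g$, and the resulting factor of $s$ is absorbed by the chain rule in the time variable.

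The second step is to apply the monotonicity of \cite{Lopt} in the rescaled flow $\tilde g$ at its times $\bar\tau_1, \bar\tau_2$, on the diffusions $\nu_i^{\tilde g}$. Combined with the scaling identity above, this converts a comparison of $\cd_g$ at times $(s\bar\tau_1, s\bar\tau_2)$ into a comparison of $\cd_g$ at $(\bar\tau_1, \bar\tau_2)$: the $\sqrt{s}$ from the scaling of $\cd$ and the weak decrease produced by the monotonicity combine to yield the advertised factor $s^{-1/2}$ on the right-hand side.

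The main obstacle I anticipate is matching the precise form of the monotonicity in \cite{Lopt} to what is needed here. The assumption that $s \geq 1$ is sufficiently close to $1$ suggests that \cite[Theorem 1.1]{Lopt} is phrased infinitesimally, perhaps as a differential inequality along the one-parameter family $s \mapsto \cd(\nu_1(s\bar\tau_1), s\bar\tau_1; \nu_2(s\bar\tau_2), s\bar\tau_2)$, from which integrating from $1$ to $s$ recovers the stated estimate. If on the other hand \cite[Theorem 1.1]{Lopt} is already in this finite form but stated via $V$, the proof reduces to the algebraic substitution $V = \cd + n(\sqrt{\tau_2} - \sqrt{\tau_1})$ and the scaling calculation above, with no further analytic input required.
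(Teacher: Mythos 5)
The paper offers no independent proof of this statement: it is presented as a rephrasing of \cite[Theorem 1.1]{Lopt} via the dictionary $\cd = V - n(\sqrt{\tau_2}-\sqrt{\tau_1})$ set up just before the theorem (so that $s^{\half}\,\cd(\nu_1(s\bar\tau_1),s\bar\tau_1;\nu_2(s\bar\tau_2),s\bar\tau_2) = s^{\half}V(\nu_1(s\bar\tau_1),s\bar\tau_1;\nu_2(s\bar\tau_2),s\bar\tau_2) - ns(\sqrt{\bar\tau_2}-\sqrt{\bar\tau_1})$ is the renormalised quantity whose weak decrease is the cited result), which is exactly the second alternative you describe, so your proposal is correct and essentially the paper's (purely definitional) argument. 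Your parabolic-rescaling lemma $\tilde g(u)=s^{-1}g(su)$, $Q_g(x,s\tau_1;y,s\tau_2)=\sqrt{s}\,Q_{\tilde g}(x,\tau_1;y,\tau_2)$ is accurate but redundant here, since $s^{\half}\cd$ along the scaled times is invariant under that rescaling; it would only earn its keep in your anticipated scenario where the cited monotonicity is stated infinitesimally at $s=1$.
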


This formulation of the theorem indicated to us that we should
look for a context in which the result arises as an application
of Theorem \ref{W1thm}.
This leads to Theorem \ref{mainthm},
and we explain the connection in the next section.

\section{The relationship between $(\hat\m,\hat g)$ and
$\cl$-optimal transportation}
\label{heuristicssect}

In this section, as opposed to all others 
in this paper,
we allow ourselves to make purely heuristic arguments.
We present a formal argument to recover Theorem \ref{loptthm}
by applying Theorem \ref{W1thm}  
to a flow starting at
the specific manifold $(\hat\m,\hat g)$ of Theorem
\ref{mainthm}.
While this is the simplest way to explain the link between
our new manifold $(\hat\m,\hat g)$ and optimal transportation,
we stress that the main interest here is the reverse flow
of ideas: the Ricci soliton $(\hat\m,\hat g)$ was 
\emph{discovered} by trying to find a context in which
Theorem \ref{W1thm} formally implied Theorem \ref{loptthm}.

We begin this section by arguing formally that
shortest paths in $(\hat\m,\hat g)$ correspond to $\cl$-geodesics
for the original Ricci flow.
Suppose $x,y\in\m$ and $[\tau_1,\tau_2]\subset (0,\infty)$
lies within the time domain on which the Ricci flow is defined.
Consider paths $\Ga:[\tau_1,\tau_2]\to\hat\m$ connecting
$(x,\tau_1)$ and $(y,\tau_2)$ in $\hat\m$ of the form
$\Ga(\tau)=(\ga(\tau),\tau)$, where $\ga:[\tau_1,\tau_2]\to\m$
satisfies $\ga(\tau_1)=x$ and $\ga(\tau_2)=y$.
Then 
\beq
\label{Gaexp}
\begin{aligned}
Length(\Ga)&=\int_{\tau_1}^{\tau_2} \left|\ga'(\tau)+\pl{}{\tau}
\right|_{\hat g}d\tau\\
&= \int_{\tau_1}^{\tau_2} \left[ 
\frac{|\ga'|_{g(\tau)}^2}{\tau}
+\frac{N}{2\tau^3}+\frac{R}{\tau}-\frac{n}{2\tau^2}\right]^\half
d\tau \\
&=\int_{\tau_1}^{\tau_2} \left[ \frac{N}{2\tau^3}\right]^\half
\left(1+\frac{\tau^2|\ga'|^2}{N}+\frac{\tau^2 R}{N}
-\frac{\tau n}{2N} + O({\textstyle\frac{1}{N^2}})\right)d\tau\\
&=\sqrt{2N}(\tau_1^{-\half}-\tau_2^{-\half})
+\frac{1}{\sqrt{2N}}\left[\cl(\ga)
-n(\sqrt{\tau_2}-\sqrt{\tau_1})\right] 
+ O({\textstyle\frac{1}{N^{3/2}}}).
\end{aligned}
\eeq
Just as in \cite[\S 6]{P1}, this indicates that minimising
paths $\Ga$ should essentially arise from
$\cl$-geodesics $\ga$ for large $N$, and suggests the formula
\beq
\label{distexpansion}
d_{\hat g}((x,\tau_1),(y,\tau_2))=
\sqrt{2N}(\tau_1^{-\half}-\tau_2^{-\half})
+\frac{1}{\sqrt{2N}}\left[Q(x,\tau_1;y,\tau_2)
-n(\sqrt{\tau_2}-\sqrt{\tau_1})\right] 
+ O({\textstyle\frac{1}{N^{3/2}}}).
\eeq

We now wish to apply Theorem \ref{W1thm} to certain
diffusions on a reverse Ricci flow $G(s)$ 
on \emph{space-time}
 starting at time $s=1$ with the metric $G(1)=\hat g$
on $\hat\m=\m\times (a,b)$.
Since $\hat g$ satisfies the (approximate) Ricci soliton
equation \eqref{solitonequation}, the 
theory of Ricci solitons (see \cite[\S 1.2.2]{RFnotes})
tells us that (modulo errors) we can take 
$G(s)$ to be
\beq
\label{Gdef}
G(s):=s\,\psi_s^*(\hat g)
\eeq 
where $\psi_1:\hat\m\to\hat\m$ is the identity,
and $\psi_s$ is the family of maps 
obtained by integrating the vector field
$X_s:=-\frac{1}{s}\hat\grad (\frac{N}{2\tau})$ 
with $\hat\grad$ representing
the gradient with respect to $\hat g$.

We may compute
\beq
\label{Xapprox}
X_s=\frac{N}{2s\tau^2}\hat g_{00}^{-1}\pl{}{\tau}=
\frac{\tau}{s}\pl{}{\tau}+O({\textstyle \frac{1}{N}}),
\eeq
and by neglecting the error of order $\frac{1}{N}$ (but see
also Section \ref{meansect}) this integrates to
$$\psi_s(x,\tau)\simeq (x,s\tau).$$
In particular, the (approximate, reverse) Ricci flow $G(s)$ operates
by pulling back $\hat g$ in time, and scaling appropriately.

Applying Theorem \ref{W1thm}, we find that two diffusions
on the evolving manifold with metric
$G(s)$ should get closer in the $W_1$ sense
as $s$ increases. 
The Ricci flow $g(\tau)$ we use to generate $(\hat\m,\hat g)$
will be that in the hypotheses of
Theorem \ref{loptthm}. 
The measures we wish
to put into Theorem \ref{W1thm} will be derived from
the measures $\nu_1(\tau)$ and $\nu_2(\tau)$ appearing in the
hypotheses of Theorem \ref{loptthm}, together with the
corresponding $\bar\tau_1$ and $\bar\tau_2$.
Let $F_\tau:\m\to\hat\m$ be defined to be the embedding
$F_\tau(x)=(x,\tau)$; then the initial measures 
we wish to put into Theorem \ref{W1thm} are 
$(F_{\bar\tau_k})_\# \nu_k(\bar\tau_k)$, for $k=1,2$,
which are each supported on a time-slice in $\hat\m$.
Because of the extreme stretching of the $\tau$ direction
in the metric $\hat g$ (recall that $\hat g_{00}$ is of 
order $N$) there is essentially no diffusion of the
measures in the $\tau$ direction, and we view them as
remaining supported in the time slices $\m\times \{\bar\tau_k\}$.
They then evolve mainly under diffusion in the $\m$ factor,
under the Laplacian induced by $G_{ij}(s)$. By \eqref{Gdef}
and the definition of $\hat g$ from Theorem \ref{mainthm}, 
on the time-slice $\m\times \{\bar\tau_k\}$ we have (approximately)
$$G_{ij}(s)= s [\psi_s^*(\hat g)]_{ij}
= s [ \hat g|_{\m\times \{s\bar\tau_k\}}]_{ij}
=\frac{g_{ij}(s\bar\tau_k)}{\bar\tau_k}.$$
Therefore, the measures evolve by diffusion in the $\m$
factor under 
$$\lap_{\frac{g(s\bar\tau_k)}{\bar\tau_k}}
=\bar\tau_k \lap_{g(s\bar\tau_k)},$$ 
which is also 
the evolution of $s\mapsto \nu_k(s\bar\tau_k)$.
In other words, we have (formally) deduced that
$$s\mapsto W_1^{G(s)}((F_{\bar\tau_1})_\# \nu_1(s\bar\tau_1),
(F_{\bar\tau_2})_\# \nu_2(s\bar\tau_2))$$
is weakly decreasing.

If we now push forward this whole construction under
the maps $\psi_s$, and adopt the abbreviation
$$\hat\nu_k(\tau):=(F_{\tau})_\# \nu_k(\tau),$$
then we find that 
\beq
\label{W1mono}
s\mapsto W_1^{s\hat g}(\hat\nu_1(s\bar\tau_1),
\hat\nu_2(s\bar\tau_2))=
s^\half W_1^{\hat g}(\hat\nu_1(s\bar\tau_1),
\hat\nu_2(s\bar\tau_2))
\eeq
is weakly decreasing.

Now the expansion \eqref{distexpansion} of the Riemannian
distance on $(\hat\m,\hat g)$ suggests that
$$W_1^{\hat g}(\hat\nu_1(\tau_1),\hat\nu_2(\tau_2))\simeq
\sqrt{2N}(\tau_1^{-\half}-\tau_2^{-\half})
+\frac{1}{\sqrt{2N}}
\cd (\nu_1(\tau_1),\tau_1;\nu_2(\tau_2),\tau_2)
.$$
Therefore, the monotonicity in \eqref{W1mono} implies that
$$s\mapsto\sqrt{2N}(\bar\tau_1^{-\half}-\bar\tau_2^{-\half})
+\frac{s^\half}{\sqrt{2N}}
\cd (\nu_1(s\bar\tau_1),s\bar\tau_1;\nu_2(s\bar\tau_2),s\bar\tau_2)
$$
is monotonically decreasing, and hence so is
$$s\mapsto s^\half 
\cd (\nu_1(s\bar\tau_1),s\bar\tau_1;\nu_2(s\bar\tau_2),s\bar\tau_2),$$
which is the content of Theorem \ref{loptthm} as desired.

\section{Construction of a space-time steady soliton}
\label{steadysect}

In this section we make a steady soliton construction
analogous to the shrinking soliton construction of Theorem
\ref{mainthm}. There is also an expanding soliton construction
similar to that of Theorem \ref{mainthm} which we will use
in \cite{CT2} to prove Harnack inequalities.
Only the shrinking case is adapted to Perelman's $\cl$-length
and his monotonic quantities which are so important in
the study of finite-time singularities \cite{P1, P2, P3}. 
However, the steady case is the simplest construction of all
and has its own potential applications.

\begin{thm}
\label{steadythm}
Suppose $g(\tau)$ is a (reverse) Ricci flow -- i.e. a
solution of $\pl{g}{\tau}=2\,\Ric(g(\tau))$ -- defined for
$\tau$ within a time interval $(a,b)\subset \R$, on a
manifold $\m$ of dimension $n\in\N$, and with bounded curvature.
Suppose 
$N\in\N$ is sufficiently large
to give a positive definite metric $\bar g$ on 
$\hat\m:=\m\times  (a,b)$ defined by
$$\bar g_{ij}=g_{ij}; 
\qquad 
\bar g_{00}=N+R;
\qquad
\bar g_{0i}=0,$$
where $i,j$ are coordinate indices on the \m\ factor and  
$0$ represents the index of the time coordinate $\tau\in (a,b)$.

Then up to errors of order $\frac{1}{N}$, the metric $\bar g$
is a gradient steady Ricci soliton on $\hat\m$:
\beq
\label{steadysolitonequation}
\Ric(\bar g)+\Hess_{\bar g} \left(-N\tau\right)
\simeq 0,
\eeq
in the same sense as in Theorem \ref{mainthm}.
\end{thm}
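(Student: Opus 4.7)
The plan is to follow the same strategy as for Theorem \ref{mainthm}: compute the Christoffel symbols of $\bar g$ exactly, deduce the Ricci components and the Hessian of $f:=-N\tau$ modulo $O(1/N)$, and then observe that the leading terms cancel. The calculations are actually simpler in this steady case because $\bar g_{ij}$ carries no $\tau$-factor and $\bar g_{00}$ has no $1/\tau^k$ terms.

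First I would compute the Christoffel symbols using $\bar g^{00}=(N+R)^{-1}=\frac1N+O(N^{-2})$, $\bar g^{ij}=g^{ij}$, and $\partial_\tau g_{ij}=2R_{ij}$. The result should read (exactly)
$$\bar\Ga^i_{jk}=\Ga^i_{jk},\qquad \bar\Ga^i_{j0}={R^i}_j,\qquad \bar\Ga^i_{00}=-\tfrac12 g^{ij}\partial_j R,$$
$$\bar\Ga^0_{jk}=-\bar g^{00}R_{jk},\qquad \bar\Ga^0_{i0}=\tfrac12\bar g^{00}\partial_i R,\qquad \bar\Ga^0_{00}=\tfrac12\bar g^{00}R_\tau.$$
The second formula is the only one that differs substantially from its counterpart in Theorem~\ref{mainthm}: the $-\delta^i_j/(2\tau)$ correction is absent, reflecting the absence of the shrinking scale on $\bar g_{ij}$. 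All mixed Christoffel symbols carrying an upper $0$ index are $O(1/N)$.

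Next I would plug these into $\bar R_{ab}=\partial_c\bar\Ga^c_{ab}-\partial_b\bar\Ga^c_{ac}+\bar\Ga^c_{ab}\bar\Ga^d_{cd}-\bar\Ga^d_{ac}\bar\Ga^c_{bd}$. The terms involving $\bar\Ga^0_{\cdot\cdot}$ are all $O(1/N)$, so the purely spatial piece gives $\bar R_{ij}\simeq R_{ij}$. For $\bar R_{i0}$ the surviving leading order terms combine, via the contracted Bianchi identity $\grad_i{R^i}_j=\tfrac12\grad_j R$, to give $\bar R_{i0}\simeq -\tfrac12\grad_i R$, exactly as in Proposition~\ref{ric_hess_prop}. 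For $\bar R_{00}$, the surviving term of leading order is $-\partial_0\bar\Ga^c_{0c}$ coming from $-\partial_0\bar\Ga^j_{0j}=-\partial_\tau\tr({R^i}_j)=-\partial_\tau R$ plus interior products, which, after using the evolution equation $R_\tau+\Delta R+2|\Ric|^2=0$, should collapse to $\bar R_{00}\simeq -\tfrac12 R_\tau$. (A clean shortcut: the computations differ from the shrinking case only by the missing $1/\tau$ factors and cosmological shifts, so one can just specialize the parallel identities.)

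For the Hessian of $f=-N\tau$, the derivatives $\partial_i f=0$ and $\partial_0 f=-N$ trivialize things: $\bar\grad^2_{ab}f=N\bar\Ga^0_{ab}$. Using $N\bar g^{00}=\frac{N}{N+R}=1+O(1/N)$, the formulas above give
$$\bar\grad^2_{ij}f\simeq -R_{ij},\qquad \bar\grad^2_{i0}f\simeq \tfrac12\grad_i R,\qquad \bar\grad^2_{00}f\simeq \tfrac12 R_\tau.$$
Adding the three Hessian components to the three Ricci components termwise yields $0$ in each case, up to errors bounded by $C/N$ locally, which is precisely \eqref{steadysolitonequation} in the sense of Theorem~\ref{mainthm}.

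There is no real obstacle here; the only mild point of care is tracking which terms in the Ricci formula are $O(1)$ versus $O(1/N)$, since several products of Christoffel symbols involve one factor of $\bar g^{00}\sim 1/N$ paired with a factor of order $N$ arising from $\partial_\tau\bar g_{00}$ or similar — a book-keeping exercise that is entirely analogous to the shrinking case and requires no new idea beyond the Bianchi identity and the evolution equation for $R$.
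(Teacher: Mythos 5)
Your proposal is correct and follows essentially the same route as the paper: compute the Christoffel symbols of $\bar g$ exactly (your formulas agree with the paper's, since $\tfrac12\bar g^{00}\partial R = \tfrac12\partial\ln(N+R)$), then read off $\bar R_{ab}$ and $\bar\grad^2_{ab}(-N\tau)=N\bar\Ga^0_{ab}$ up to $O(1/N)$ using the contracted Bianchi identity and $R_\tau+\Delta R+2|\Ric|^2=0$, and observe the termwise cancellation. The only quibbles are cosmetic: in the steady case $\partial_\tau\bar g_{00}=R_\tau$ is $O(1)$, so the "$1/N$ times $N$" bookkeeping worry is vacuous here, and more than just $\bar\Ga^i_{j0}$ differs from the shrinking-case symbols.
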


For this metric, 
the Christoffel symbols can be computed to be 
$$
\bar \Ga^i_{jk}=\Ga^i_{jk};\quad
\bar \Ga^i_{j0}={R^i}_j;\quad
\bar \Ga^i_{00}=-\half g^{ik}\pl{R}{x^k};\quad
$$
$$
\bar \Ga^0_{jk}=-\frac{1}{N+R}R_{jk};\quad
\bar \Ga^0_{j0}=\frac{1}{2} \pl{}{x^j}\ln(N+R);\quad
\bar \Ga^0_{00}=\half \pl{}{\tau} \ln (N+R).
$$
The Ricci coefficients are, up to errors of order $\frac{1}{N}$,
$$\bar R_{ij}\simeq R_{ij};\qquad
\bar R_{i0}\simeq - \frac{\grad_i R}{2};\qquad
\bar R_{00}\simeq -\frac{R_\tau}{2},$$
and the coefficients of $\Hess_{\bar g}(-N\tau)$
are
$$\bar\grad^2_{ij}(-N\tau)
\simeq -R_{ij};\qquad
\bar\grad^2_{i0}(-N\tau)
\simeq \frac{\grad_i R}{2};\qquad
\bar\grad^2_{00}(-N\tau)
\simeq \frac{R_\tau}{2},$$
which yields Theorem \ref{steadythm}.

Just as the $(\hat\m,\hat g)$ of Theorem \ref{mainthm} 
encodes Perelman's $\cl$-length in its geodesic distance,
in the sense of \eqref{Gaexp}, the manifold
$(\hat\m,\bar g)$ of Theorem \ref{steadythm} 
encodes Li-Yau's length \cite{LY}
$$\cl_0(\ga):=\int_{\tau_1}^{\tau_2} 
(R(\ga(\tau),\tau)+|\ga '(\tau)|^2)d\tau$$
which predates $\cl(\ga)$, 
via the formula (using the same notation as in
\eqref{Gaexp})
$$Length(\Ga)=
\sqrt{N}(\tau_2-\tau_1)
+\frac{1}{2\sqrt{N}}\cl_0(\ga)
+ O({\textstyle\frac{1}{N^{3/2}}}).$$

If one considers optimal transportation on this 
alternative $(\hat\m,\bar g)$, then one recovers the
variant of $\cl$-optimal transportation using the 
$\cl_0$-length of Li-Yau, as studied in
\cite{lottOT}.

\section{Mean curvature of time-slices}
\label{meansect}

In the framework of Theorem \ref{mainthm}, the underlying manifold 
${\mathcal M}$ of our original Ricci flow can be regarded as a
hypersurface $\m\times\{\tau\}$ of the ambient
$(\hat {\mathcal M}, \hat g)$. 
Its mean curvature is
$H
=H^{\hat g}_{\m\times\{\tau\}}
 = \hat g_{00}^{-\frac1{2}} (R - \frac{n}{2 \tau})$; 
in fact, the mean curvature vector 
$\vec H^{\hat g}_{\m\times\{\tau\}} 
:= - H \nu = - H \hat g_{00}^{-\frac1{2}} 
\frac{\partial}{\partial \tau}$ gives precisely the term we 
neglected in the approximate computation of $X_s$ from
\eqref{Xapprox}.
More precisely, we have
the exact formula
\beq
\label{exactformula}
-\hat\grad \left(\frac{N}{2\tau}\right) 
= \tau \frac{\partial}{\partial \tau} 
+ \vec H^{\hat g}_{\m\times\{\tau\}}.
\eeq
If we suppose that $\tau_0\in U\subset\subset (a,b)$, we can pick
$\ep>0$ sufficiently small so that for $s\in (1-\ep,1+\ep)$
the maps $\psi_s$ arising from
integrating the vector field 
$$X_s:=-\frac{1}{s}\hat\grad(\frac{N}{2\tau})$$
starting with $\psi_1:\hat\m\to\hat\m$ the identity, 
restrict to well-defined maps
$\m\times U \mapsto \hat\m$, diffeomorphic onto their images. 
It then makes sense at a rigorous level to define
$G(s):=s\,\psi_s^*(\hat g)$ as a flow on $\m\times U$
for $s\in (1-\ep,1+\ep)$,
which is then approximately a reverse Ricci flow as in Section
\ref{heuristicssect}. By reducing $\ep>0$ further, we can
also be sure that $\m\times\{s\tau_0\}\subset \psi_s(\m\times U)$
for $s\in (1-\ep,1+\ep)$.
 
We then have
the following exact statement (i.e. involving no errors at
all) relating reverse mean curvature flow to certain one-parameter
families of time-slices $\m\times\{\tau\}$, which is in a 
similar spirit to \cite[Lemma 3.2]{CC2}.
\begin{thm}
Given $U$, $\tau_0$, $\ep$, $\psi_s$ and the flow $G(s)$ as above, 
if we define 
$\f:\m\times (1-\ep,1+\ep)\to \m\times U$ 
by $\cf(x,s)=\psi_s^{-1}(x,s\tau_0)$,
then the one-parameter family of submanifolds
$$\n_s:=\cf(\m\times\{s\})=\psi_s^{-1}(\m\times\{s\tau_0\})$$
is a reverse mean curvature flow within the flow
$G(s)$ 
in the sense that 
$$-\pl{\cf(x,s)}{s}=\vec H^{G(s)}_{\n_s}(\cf(x,s)).$$
\end{thm}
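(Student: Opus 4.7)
The plan is to differentiate the defining identity $\psi_s(\cf(x,s)) = (x, s\tau_0)$ in $s$, and then interpret both sides using the exact formula \eqref{exactformula} together with the way mean curvature vectors transform under the isometry-plus-homothety encoded in $G(s) = s\,\psi_s^*\hat g$. The hope is that no $\tfrac{1}{N}$ approximation will be needed: the statement should fall out as an exact identity, with the key cancellations being purely algebraic.

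First, differentiating $\psi_s(\cf(x,s)) = (x, s\tau_0)$ in $s$ and using the flow ODE $\pl{\psi_s}{s}(p) = X_s(\psi_s(p))$ at $p=\cf(x,s)$ yields
\beq\label{diffstep}
X_s(x,s\tau_0) + (d\psi_s)_{\cf(x,s)}\!\left(\pl{\cf}{s}(x,s)\right) = \tau_0\,\pl{}{\tau}\bigg|_{(x,s\tau_0)}.
\eeq
Now the definition $X_s=-\frac{1}{s}\hat\grad(\frac{N}{2\tau})$ combined with \eqref{exactformula} gives
$$X_s(x,s\tau_0) = \tau_0\,\pl{}{\tau}\bigg|_{(x,s\tau_0)} + \tfrac{1}{s}\,\vec H^{\hat g}_{\m\times\{s\tau_0\}}(x,s\tau_0).$$
Substituted into \eqref{diffstep}, the two copies of $\tau_0\,\pl{}{\tau}$ cancel and what remains is
$$(d\psi_s)\!\left(\pl{\cf}{s}\right) = -\tfrac{1}{s}\,\vec H^{\hat g}_{\m\times\{s\tau_0\}}.$$
This cancellation is the reason no $\tfrac{1}{N}$ errors appear, in contrast to the approximate treatment of $X_s$ in \eqref{Xapprox}.

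Next I would transport this right-hand side back to $\n_s$ with respect to $G(s)$. Since $G(s) = s\,\psi_s^*\hat g$, the map $\psi_s\colon (\hat\m, G(s)/s) \to (\hat\m, \hat g)$ is an isometry carrying $\n_s$ diffeomorphically onto $\m\times\{s\tau_0\}$. Mean curvature vectors of hypersurfaces are pushed forward by isometries and obey the constant-conformal scaling law $\vec H^{\lambda^2 g} = \lambda^{-2}\,\vec H^g$; applied with $\lambda^2 = s$ this gives
$$\vec H^{\hat g}_{\m\times\{s\tau_0\}} = (d\psi_s)\bigl(\vec H^{G(s)/s}_{\n_s}\bigr) = s\,(d\psi_s)\bigl(\vec H^{G(s)}_{\n_s}\bigr).$$
Plugging this into the previous display and inverting $d\psi_s$ produces $-\pl{\cf}{s} = \vec H^{G(s)}_{\n_s}$, as required.

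The main obstacle is purely bookkeeping: identifying the correct pair of metrics under which $\psi_s$ is an isometry, and arranging the two factors of $s$ (one from the $\frac{1}{s}$ in the definition of $X_s$, one from the prefactor in $G(s)$) so that they telescope through the conformal rescaling rule for $\vec H$. Once these scalings are tracked, the identity falls out exactly from \eqref{exactformula}, with no estimates needed.
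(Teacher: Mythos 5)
Your proof is correct and follows essentially the same route as the paper: differentiate the defining identity between $\psi_s$ and $\cf$, use the exact formula \eqref{exactformula} to cancel the $\tau_0\,\pl{}{\tau}$ terms, and transport the mean curvature vector via the diffeomorphism $\psi_s$ together with the constant-scaling law, with all steps exact and no $\tfrac{1}{N}$ errors. The only cosmetic difference is that the paper differentiates $\psi_s\circ\psi_s^{-1}=\mathrm{id}$ and moves vectors by $(\psi_s^{-1})_*$, whereas you differentiate $\psi_s(\cf(x,s))=(x,s\tau_0)$ and invert $d\psi_s$ at the end.
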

Note that here both the (approximate) Ricci flow $G(s)$
and the mean curvature flow $\n_s$ evolve in the same 
direction -- backwards with respect to $s$.

\begin{proof}
By differentiating the expression $\psi_s \circ \psi_s^{-1}=identity$
with respect to $s$, we find that
\beq
\begin{aligned}
(\psi_s)_*\pl{\psi_s^{-1}}{s}(x,s\tau_0)
&=-\pl{\psi_s}{s}(\psi_s^{-1}(x,s\tau_0)) 
=-X_s(x,s\tau_0)
=\frac{1}{s}\hat\grad \left(\frac{N}{2\tau}\right) (x,s\tau_0)\\
&=-\tau_0\pl{}{\tau}
-\frac{1}{s}\vec H^{\hat g}_{\m\times\{s\tau_0\}}(x,s\tau_0),
\end{aligned}
\eeq
where we have used \eqref{exactformula}.
Therefore, we may compute
\begin{equation*}
\begin{aligned}
\pl{}{s}\left(\psi_s^{-1}(x,s\tau_0)\right)
&= \pl{\psi_s^{-1}}{s}(x,s\tau_0) 
+ (\psi_s^{-1})_*\left(\tau_0\pl{}{\tau}\right)
= - (\psi_s^{-1})_*
\left[\frac{1}{s}\vec H^{\hat g}_{\m\times\{s\tau_0\}}(x,s\tau_0)\right]\\
&=-\frac{1}{s}\vec H^{\psi_s^*(\hat g)}_{\n_s}(\psi_s^{-1}(x,s\tau_0))
=-\vec H^{G(s)}_{\n_s}(\cf(x,s)).
\end{aligned}
\end{equation*}
\end{proof}

We conclude with an observation about mean curvature flow
of one dimension less.
Suppose that $(\m,g(\tau))$ is a reverse Ricci flow for 
$\tau\in (a,b)$ and that for some $(n-1)$-dimensional 
manifold $P$, the map
$\si: P \times (a,b) \to {\mathcal M}$ is 
a reverse mean curvature flow in the sense that
$P_\tau := \si(P \times \{\tau\})$ is a family of smooth
hypersurfaces of $\m$ and 
$$- \frac{\partial \si}{\partial \tau} (x, \tau) 
= \vec H^{g(\tau)}_{P_\tau}(\si(x, \tau)).$$
Then the space-time track, which is the
image of $\Si:P\times (a,b)\to\hat\m$ defined by
$$\Si(x, \tau):=(\si(x, \tau), \tau),$$
is $\phi$-minimal in $(\hat\m,\hat g)$
for $\phi = \frac{N}{2\tau}$,
up to errors of order $\frac1{N}$,
in the sense that 
$$H_{\phi} := H^{\hat g}_{\Si(P\times (a,b))} 
- \langle\hat \nabla \phi, \nu_{\Si}\rangle = O(N^{-1}),$$ 
where $\nu_{\Si}$ is the unit outward normal to 
$\Si(P \times (a,b))$
and we have used the natural generalization of mean curvature 
introduced in \cite{Gr}. (See also Chapter 11 \S 3.10 of \cite{CLN} 
for the corresponding property of Perelman's metric $\tilde g$
described in Section \ref{intro}.)

{\sc mathematics institute, university of warwick, coventry, CV4 7AL,
uk}\\
\url{http://www.warwick.ac.uk/staff/E.Cabezas-Rivas}\\
\url{http://www.warwick.ac.uk/~maseq}
\end{document}